\documentclass{amsart} 
\usepackage[utf8]{inputenc}
\usepackage[T1]{fontenc}
\usepackage[top=2.54cm,bottom=2.54cm,right=3.5cm,left=3.5cm]{geometry}
\usepackage{color}
\usepackage{indentfirst}
\usepackage{lmodern} \normalfont
\DeclareFontShape{T1}{lmr}{bx}{sc} { <-> ssub * cmr/bx/sc }{}
\usepackage{amsmath}
\usepackage{amsfonts}
\usepackage{derivative}
\usepackage{amssymb}
\usepackage[alphabetic]{amsrefs}
\usepackage{color}
\usepackage{lipsum}                     
\usepackage{xargs}
\usepackage{comment}
\usepackage[pdftex,dvipsnames]{xcolor} 
\usepackage{authblk}
\usepackage{comment}

\usepackage{amsthm,mathrsfs}
\usepackage{slashed}
\usepackage{breqn}
\usepackage{pb-diagram}
\usepackage{bbm}
\usepackage[all]{xy}
\usepackage{times}
\usepackage{amsaddr}
\usepackage{microtype}
\usepackage{enumerate}
\usepackage[super]{nth}
\usepackage{multicol}
\usepackage{tikz-cd}
\usepackage{array}
\usepackage[colorinlistoftodos]{todonotes}
\usepackage{slashed}

\usepackage{hyperref}

\renewcommand{\epsilon}{\varepsilon}

\def\<{\mathopen{}\left<}
\def\>{\right>\mathclose{}}
\def\({\mathopen{}\left(}
\def\){\right)\mathclose{}}

\usepackage{multicol, color}

\definecolor{gold}{rgb}{0.85,.66,0}
\definecolor{cherry}{rgb}{0.9,.1,.2}
\definecolor{burgundy}{rgb}{0.8,.2,.2}
\definecolor{orangered}{rgb}{0.85,.3,0}
\definecolor{orange}{rgb}{0.85,.4,0}
\definecolor{olive}{rgb}{.45,.4,0}
\definecolor{lime}{rgb}{.6,.9,0}
\definecolor{green}{rgb}{.2,.7,0}
\definecolor{grey}{rgb}{.4,.4,.2}
\definecolor{brown}{rgb}{.4,.3,.1}

\makeatletter
\newtheorem*{rep@theorem}{\rep@title}
\newcommand{\newreptheorem}[2]{%
\newenvironment{rep#1}[1]{%
 \def\rep@title{#2 \ref{##1}}%
 \begin{rep@theorem}}%
 {\end{rep@theorem}}}
\makeatother

\newreptheorem{theorem}{Theorem}
\newtheorem{lemma}{Lemma}[section]
\newtheorem{corollary}[lemma]{Corollary}
\newtheorem{definition}[lemma]{Definition}

\newtheorem{remark}[lemma]{Remark}

\newtheorem*{remark*}{Remark}
\newtheorem*{theorem*}{Theorem}
\newtheorem{proposition}[lemma]{Proposition}

\newcommandx{\andresnote}[2][1=]{\todo[linecolor=blue,backgroundcolor=blue!25,bordercolor=blue,#1]{#2}}

\newcommandx{\Julinote}[2][1=]{\todo[linecolor=red,backgroundcolor=yellow!25,bordercolor=red,#1]{#2}}

\title{Contact Whirl Curves in Sasakian Lorentzian $3$-Manifolds}

\author{Luis E. Portilla P.}
\address{Department of Mathematics, University of Cauca, Co.}
\email{luisportilla@unicauca.edu.co}

\author{Hector  E. Guerrero M.}
\address{Department of Mathematics, University of Cauca, Co.}
\email{hguerrero@unicauca.edu.co}

\date{\today}
\begin{document}
 
\maketitle
\begin{abstract}
We introduce and study \emph{contact whirl curves} in three-dimensional Lorentzian contact manifolds, with emphasis on the Sasakian setting. This notion refines the concept of whirl curves by encoding the interaction between the adapted frame of a curve  and the ambient contact structure through the Reeb vector field.
For non-geodesic unit-speed contact whirl curves, we derive a differential equation governing the torsion in terms of the Frenet invariants and the contact data. In the Lorentzian Sasakian setting, this leads to rigidity phenomena of Lancret type. In particular, we prove that every non-geodesic Legendre Frenet curve is automatically a contact whirl curve, and consequently has constant torsion $\tau=1$.

We also investigate the interaction between contact whirl curves and magnetic trajectories associated with the canonical contact magnetic field. We show that every non-geodesic curve which is simultaneously magnetic and contact whirl must be Legendre, and we obtain an explicit expression for its torsion in terms of the tensor $h=\frac12\mathcal L_\xi\Phi$. In the Sasakian case, this reduces to the universal law $\tau=1$. Finally, in the Lorentzian Heisenberg group endowed with its standard Sasakian structure, we derive a coordinate form of the whirl condition and use it to produce explicit examples, including a construction by quadratures of non-Legendre contact whirl curves and a horizontal helicoidal Legendre family.
\end{abstract}
%
\section{Introduction}
The geometry of special types of curves in contact manifolds has been widely studied from several viewpoints, including Frenet theory, the interaction with the Reeb field, and the dynamics induced by magnetic trajectories. In Lorentzian contact geometry, this  includes the theory of slant curves, Legendre curves, null curves, and contact magnetic curves   \cite{cho2006slant,lee2019slant,blair2010contact}. Motivated by these developments, in this paper we introduce a new family of curves in Lorentzian Sasakian $3$--manifolds, which we call \emph{contact whirl curves}.\\ 

More precisely, motivated by \cite{guerrero2025lancret}, let $\alpha$ be a regular curve in a Lorentzian Sasakian $3$--manifold $(M,\eta,\xi,g,\Phi)$, and assume that along $\alpha$ there exists an adapted frame $\{T,N,B\}$, namely, the Frenet frame in the non-null case or the Cartan frame in the non-geodesic null case. We say that $\alpha$ is a \textit{contact whirl curve} if there exists a constant $\varrho\neq 0$ such that
$$
\eta(T)=\varrho\,\eta(N).
$$
This whirl condition couples the Reeb components of the tangent and normal directions, and therefore naturally relates the geometry of the adapted frame to the ambient Sasakian structure.\\

The first part of the paper develops the basic structure of this notion. In Section~\ref{Sec:2} we recall the necessary background on Lorentzian contact metric and Sasakian $3$--manifolds, including the Frenet formalism for non-null curves and the identities satisfied by the structure tensors and we introduce the notion of contact whirl curves. In Section~\ref{Sec:3}  we  derive the fundamental differential identities satisfied by the Reeb components $\eta(T)$, $\eta(N)$, and $\eta(B)$. These identities lead to a local differential equation (cf. Proposition \ref{prop:tauprime_whirl}) governing the torsion of a non-geodesic contact whirl curve in the Lorentzian Sasakian setting. In particular, under explicit nondegeneracy assumptions, we prove that the torsion satisfies a Lancret-type differential relation, which can also be rewritten as an evolution equation for the scalar quantity
\[
\frac{\tau-1}{\kappa}.
\]
This provides a useful description of the coupled behavior of curvature and torsion for non-geodesic whirl curves. 

We analyze two distinguished subclasses. The first one is the Legendre case, we prove that a Legendre Frenet curve is contact whirl if and only if $\eta(N)=0$, and that every non-geodesic Legendre contact whirl curve necessarily has torsion $\tau=1.$ (cf. Proposition \ref{prop:legendreWhirl}). Conversely, in a Lorentzian Sasakian $3$--manifold every non-geodesic Legendre Frenet curve is automatically a contact whirl curve. Thus, within this class, the contact whirl condition is both natural and rigid. 

The second subclass arises from magnetic trajectories associated with the canonical contact magnetic field. In Section~\ref{sec:magnetic} we show that if a non-geodesic Frenet curve is simultaneously magnetic and contact whirl, then it must be Legendre (cf. Proposition \ref{prop:magnetic_legendre}). Moreover, its torsion is explicitly determined by the tensor $h=\frac12\mathcal L_\xi\Phi$. Furthermore, in the Sasakian case one recovers the universal law $\tau=1.$ This gives a dynamical interpretation of the whirl condition and provides an analogue of the  compatibility phenomena observed by Lee for slant magnetic curves.\\

In Section~\ref{Sec:4} we consider the null case. We work directly with the Cartan frame and the Sasakian identities. We obtain a structural description of a hypothetical non-geodesic null contact whirl curve: the Reeb components, the expression of $\Phi T$, the torsion, and the decomposition of the Reeb field relative to the Cartan frame are all shown to be strongly constrained. In this sense, Section~\ref{Sec:4} provides the null analogue of the structural results known for null slant curves \cite{lee2019slant}.\\

Finally, in Section~\ref{Sec:5} we realize the theory in the Lorentzian Heisenberg group. We derive a coordinate form of the whirl condition and show that   it can be used as a construction principle. More precisely, the whirl condition  reduces, in this model, to a first-order equation for the Reeb component of the tangent field, and this yields a method of construction by quadratures for non-Legendre contact whirl curves. We also exhibit an explicit horizontal helicoidal Legendre family, thereby showing that the class of contact whirl curves is nonempty and geometrically rich even in this homogeneous Sasakian model. Taken together, these results show that contact whirl curves form a natural and robust class of curves in Lorentzian Sasakian geometry. They interact in a nontrivial way with Legendre geometry, magnetic dynamics, null Cartan theory, and explicit left-invariant models such as the Lorentzian Heisenberg group.\\

\medskip

\noindent\textbf{Acknowledgements}
The authors are grateful to the Geometry Seminar of the Research Group on Functional Spaces at the University of Cauca for providing a stimulating academic environment in which part of this work was developed.
\section{Preliminaries}\label{Sec:2}
Let $(M,\eta,\xi,g,\Phi)$ be a $3$--dimensional Lorentzian contact metric manifold, where $g$ is a Lorentzian metric, $\eta$ is the contact form, $\xi$ is the Reeb vector field, and $\Phi$ is the structure endomorphism. We adopt the convention
\[
\eta(\xi)=1 \quad\text{and}\quad g(\xi,\xi)=-1.
\]
The structure tensors satisfy
\begin{equation}\label{eq:gdeta}
g(\Phi X,\Phi Y)=g(X,Y)+\eta(X)\eta(Y),
\end{equation}
for all vector fields $X,Y$ on $M$, and the associated fundamental $2$--form is given by 
$$
\omega(X,Y) =d\eta(X,Y) =g(X,\Phi Y).
$$ 
Moreover, taking $Y=\xi$ in \eqref{eq:gdeta}, we obtain
\begin{equation}\label{eq:eta}
\eta(X)=-g(X,\xi).
\end{equation}
In a general contact Lorentzian manifold one has
\begin{equation}\label{eq:Phi 1}
\nabla_X\xi=\Phi X-\Phi hX,\quad\text{where}\quad h=\frac12\mathcal L_\xi\Phi.
\end{equation} 
A contact Lorentzian manifold is called \emph{$K$--contact} if the Reeb vector field $\xi$ is Killing. In dimension $3$, this is equivalent to $h=0$, i.e., $(M,\eta,\xi,g,\Phi)$ is  Sasakian \cite{blair2006contact}. Therefore \eqref{eq:Phi 1} reduces to
\begin{equation}\label{eq:Phi 2}
\nabla_X\xi=\Phi X.
\end{equation}
Hence, throughout this paper, the assumption that $\xi$ is Killing guarantees the full Sasakian structure and in this case we have 
\begin{equation}\label{eq:sasakianlorentz}
(\nabla_X\Phi)Y=g(X,Y)\xi+\eta(Y)X.
\end{equation}
Let $\alpha:I\to M$ be a unit--speed non-null curve such that
\[
g(\alpha',\alpha')=\varepsilon_1\in\{1,-1\}.
\]
The constant $\varepsilon_1$ is called the causal character of $\alpha$.
Accordingly, $\alpha$ is said to be spacelike if $\varepsilon_1=1$ and timelike if $\varepsilon_1=-1$.
Assume that $\nabla_{\alpha'}\alpha'$ is non-null. Then $\alpha$ admits an orthonormal Frenet frame $\{T=\alpha',N,B\}$,   satisfying
\[
g(T,T)=\varepsilon_1,\quad
g(N,N)=\varepsilon_2,\quad
g(B,B)=\varepsilon_3, \quad\text{with } \quad\varepsilon_1\varepsilon_2=-\varepsilon_3. 
\]
The Frenet equations are
\begin{equation}\label{eq:FrenetSerret}
\begin{cases}
\nabla_{\alpha'}T &=\varepsilon_2\kappa N,\\ 
\nabla_{\alpha'}N &=-\varepsilon_1\kappa T-\varepsilon_3\tau B,\\ 
\nabla_{\alpha'}B &=\varepsilon_2\tau N,
\end{cases}
\end{equation}
where $\kappa>0$ is the curvature and $\tau$ is the torsion of $\alpha$. 
For orthonormal non-degenerate frames, we shall use  the Lorentzian vector product introduced in \cite{camci2012extended}:
\begin{equation}\label{eq: cross}
X\wedge Y=g(X,\Phi Y)\xi-\eta(Y)\Phi X+\eta(X)\Phi Y.
\end{equation}
This product is bilinear, skew-symmetric, and orthogonal to both $X$ and $Y$. Taking $Y=\xi$ in \eqref{eq: cross}, we obtain
\begin{equation}\label{eq:xiwedgex}
\Phi X=\nabla_X\xi=-X\wedge\xi,
\end{equation}
Finally, if $\{T,N,B\}$ is an orthonormal frame in a Lorentzian $3$--manifold, then
\begin{equation}\label{eq:CrossBasis}
T\wedge N=\varepsilon_3 B,\qquad
N\wedge B=\varepsilon_1 T,\qquad
B\wedge T=\varepsilon_2 N.
\end{equation}
\subsection{Contact whirl curves}
Now we introduce  a new family of  curves in Lorentzian Sasakian $3$--manifolds, motivated by the theory of slant curves developed in \cite{lee2019slant,cho2006slant}. This is the central notion to work with in this paper. 
\begin{definition}
Let $\alpha:I\to M$ be a regular curve in a Lorentzian Sasakian $3$--manifold.
Assume that along $\alpha$ there exists an adapted frame $\{T,N,B\}$, namely, the Frenet frame in the non-null case, or the Cartan frame in the non-geodesic null case. We say that $\alpha$ is a contact whirl curve if there exists a nonzero constant $\varrho\in\mathbb R$ such that
\begin{equation}\label{eq:whirlcurve}
\eta(T)=\varrho\,\eta(N).
\end{equation}
\end{definition}
The condition \eqref{eq:whirlcurve} should be compared with the slant condition \cite{lee2019slant}, where the angle between the tangent  vector and the Reeb field is constant. In contrast, a contact whirl curve couples the Reeb components of the tangent and normal directions. 
\section{Frenet Contact Whirl Curves}\label{Sec:3}
In this section we restrict to the non-null case, so that the adapted frame is the Frenet frame. 
\subsection{Basic identities and the torsion equation}
We next collect the basic identities that will be used throughout the paper. Since $\{T,N,B\}$ is an orthonormal frame along $\alpha$, the Reeb vector field decomposes as
\begin{equation}\label{eq:reeb1}
\begin{aligned}
\xi
&=\varepsilon_1 g(\xi,T)T+\varepsilon_2 g(\xi,N)N+\varepsilon_3 g(\xi,B)B\\
&=-\varepsilon_1\eta(T)T-\varepsilon_2\eta(N)N-\varepsilon_3\eta(B)B.
\end{aligned}
\end{equation}
We shall use the notation $\varepsilon_{ij}:=\varepsilon_i\varepsilon_j,$ and $\varepsilon_{ijk}:=\varepsilon_i\varepsilon_j\varepsilon_k.$
Using \eqref{eq:xiwedgex}, \eqref{eq:CrossBasis}, and the decomposition \eqref{eq:reeb1}, the action of the tensor $\Phi$ on the Frenet frame can be written explicitly. Indeed,
\begin{align*}
\Phi T &=\xi\wedge T =\big(-\varepsilon_2\eta(N)N-\varepsilon_3\eta(B)B\big)\wedge T 
=\varepsilon_{23}\eta(N)B-\varepsilon_{23}\eta(B)N.
\end{align*}
Proceeding in the same way for $N$ and $B$, we obtain
\begin{equation}\label{eq:PhiEi}
\begin{aligned}
\Phi T &= \varepsilon_{23}\big(\eta(N)B-\eta(B)N\big),\\
\Phi N &= \varepsilon_{13}\big(\eta(B)T-\eta(T)B\big),\\ 
\Phi B &= \varepsilon_{12}\big(\eta(T)N-\eta(N)T\big).
\end{aligned}
\end{equation}
Differentiating the functions $\eta(T)$, $\eta(N)$ and $\eta(B)$ along the curve, and using \eqref{eq:eta}, \eqref{eq:FrenetSerret}, \eqref{eq:Phi 1}, and \eqref{eq:PhiEi}, we obtain the identities that will be used later.  First,
\begin{align*}
\eta(T)'&=-\frac{d}{ds}g(T,\xi) 
=-g(\nabla_TT,\xi)-g(T,\nabla_T\xi)\\
&=-g(\varepsilon_2\kappa N,\xi)-g\bigl(T,\Phi T-\Phi hT\bigr)
=\varepsilon_2\kappa\,\eta(N)+g(T,\Phi hT),
\end{align*}
and therefore
\begin{equation}\label{eq:etaprima1}
\eta(T)'=\varepsilon_2\kappa\,\eta(N)+g(T,\Phi hT).
\end{equation}
Similarly,
\begin{align*}
\eta(N)'&=-\frac{d}{ds}g(N,\xi) =-g(\nabla_TN,\xi)-g(N,\nabla_T\xi)\\
&=-g\bigl(-\varepsilon_1\kappa T-\varepsilon_3\tau B,\xi\bigr)-g\bigl(N,\Phi T-\Phi hT\bigr),
\end{align*}
which gives
\begin{equation}\label{eq:etaprima2}
\eta(N)'=-\varepsilon_1\kappa\,\eta(T)-\varepsilon_3(\tau-1)\eta(B)+g(\Phi hT,N).
\end{equation}
Finally,
\begin{align*}
\eta(B)' &=-\frac{d}{ds}g(B,\xi)=-g(\nabla_TB,\xi)-g(B,\nabla_T\xi)\\
&=-g(\varepsilon_2\tau N,\xi)-g\bigl(B,\Phi T-\Phi hT\bigr),
\end{align*}
and hence
\begin{equation}\label{eq:etaprima3}
\eta(B)'=\varepsilon_2(\tau-1)\eta(N)+g(B,\Phi hT).
\end{equation}

In particular, when the Reeb vector field $\xi$ is Killing, one has $h=0$, and the above identities reduce to the Sasakian case. The following proposition gives a local differential relation governing the torsion of a non-geodesic contact whirl curve in the Lorentzian Sasakian setting.
\begin{proposition}\label{prop:tauprime_whirl}
Let $(M^{3},\Phi,\xi,\eta,g)$ be a Lorentzian Sasakian manifold and
let $\alpha:I\to M$ be a non-geodesic contact whirl curve with Frenet frame
$\{T,N,B\}$.
Assume that, on an open interval $J\subset I$, one has $\eta(T)\neq 0,$ $\tau\neq 1,$ and  $A:=\varepsilon_1+\frac{\varepsilon_2}{\varrho^2}\neq 0.$
Then the torsion $\tau$ satisfies on $J$
\begin{equation}\label{eq:tauprime_whirl}
\tau'
=
(\tau-1)\!\left(
\frac{\varepsilon_2\varepsilon_3}{\varrho\,\kappa\,A}(\tau-1)^2
+\frac{\kappa'}{\kappa}
+\frac{\varepsilon_2}{\varrho}\kappa
\right).
\end{equation}
\end{proposition}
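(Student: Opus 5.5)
The plan is to specialize the three Reeb-component identities \eqref{eq:etaprima1}--\eqref{eq:etaprima3} to the Sasakian case $h=0$. We then use the whirl condition \eqref{eq:whirlcurve} to eliminate $\eta(T)$, express $\eta(B)$ algebraically in terms of $\eta(N)$, and finally feed this expression back into the equation for $\eta(B)'$. With $h=0$ the identities read
\[
\eta(T)'=\varepsilon_2\kappa\,\eta(N),\qquad
\eta(N)'=-\varepsilon_1\kappa\,\eta(T)-\varepsilon_3(\tau-1)\eta(B),\qquad
\eta(B)'=\varepsilon_2(\tau-1)\eta(N).
\]
On $J$ we have $\eta(T)\neq0$, hence $\eta(N)=\eta(T)/\varrho\neq0$, and we may divide by it freely.

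\textbf{Step 1.} Since $\varrho$ is constant, differentiating $\eta(T)=\varrho\,\eta(N)$ and using the first identity gives
\[
\eta(N)'=\frac{\varepsilon_2\kappa}{\varrho}\,\eta(N).
\]

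\textbf{Step 2.} Substitute this and $\eta(T)=\varrho\,\eta(N)$ into the second identity. This yields
\[
\varepsilon_3(\tau-1)\eta(B)=-\kappa\Big(\varepsilon_1\varrho+\frac{\varepsilon_2}{\varrho}\Big)\eta(N)=-\kappa\varrho A\,\eta(N).
\]
Since $\tau\neq1$, this can be solved as
\[
\eta(B)=-\varepsilon_3\varrho A\,f\,\eta(N),\qquad f:=\frac{\kappa}{\tau-1}.
\]

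\textbf{Step 3.} Differentiate this expression for $\eta(B)$, using Step 1 for $\eta(N)'$, and equate the result with the third identity. After dividing by $\eta(N)\neq0$ and using $A\neq0$, one obtains the first-order relation
\[
f'+\frac{\varepsilon_2\kappa}{\varrho}f=-\frac{\varepsilon_2\varepsilon_3}{\varrho A}(\tau-1).
\]
This is already the announced evolution equation for $(\tau-1)/\kappa=1/f$, written in terms of its reciprocal.

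\textbf{Step 4.} Expand $f'=\kappa'/(\tau-1)-\kappa\tau'/(\tau-1)^2$, multiply the relation by $(\tau-1)^2/\kappa$ (allowed since $\kappa>0$), and solve for $\tau'$. This should give exactly \eqref{eq:tauprime_whirl}.

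The computation is elementary. The only step where care is needed is Step 2: the factor $\varepsilon_1\varrho+\varepsilon_2/\varrho=\varrho A$ must be identified correctly, and each nondegeneracy hypothesis ($\eta(T)\neq0$, $\tau\neq1$, $A\neq0$) is used exactly once, at the division it permits. Sign bookkeeping with $\varepsilon_3^2=1$ in Step 3 is the main source of possible error, so I would double-check the sign of the cubic term $\varepsilon_2\varepsilon_3(\tau-1)^2/(\varrho\kappa A)$ at the end.
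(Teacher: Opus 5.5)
Your proposal is correct and follows the paper's proof. Both arguments combine the differentiated whirl relation with the $\eta(N)'$ identity to write $\eta(B)$ as a multiple of the Reeb components, then differentiate that expression and compare it with the $\eta(B)'$ identity; the only differences are that you factor out $\eta(N)$ rather than $\eta(T)$ and name $f=\kappa/(\tau-1)$. I checked the bookkeeping in Steps 2--4: the relation $f'+\frac{\varepsilon_2\kappa}{\varrho}f=-\frac{\varepsilon_2\varepsilon_3}{\varrho A}(\tau-1)$ is right and gives exactly \eqref{eq:tauprime_whirl}, including the sign of the cubic term.
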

\begin{proof} 
Using the contact whirl condition $\eta(N)=\frac{1}{\varrho}\eta(T)$, we first rewrite \eqref{eq:etaprima1}--\eqref{eq:etaprima3} eliminating $\eta(N)$.  From \eqref{eq:etaprima1} we obtain
$$
\eta(T)'=\frac{\varepsilon_2}{\varrho}\,\kappa\,\eta(T).
$$
Differentiating the Whirl condition \eqref{eq:whirlcurve}  gives:
$\eta(N)'=\frac{1}{\varrho}\eta(T)'=\frac{\varepsilon_2}{\varrho^2}\,\kappa\,\eta(T).$
Comparing this with \eqref{eq:etaprima2} we get an expression for $\eta(B )$:
\[
\frac{\varepsilon_2}{\varrho^2}\kappa\eta(T) = -\varepsilon_1\kappa\eta(T)
-\varepsilon_3(\tau-1)\eta(B ),
\]
hence
\begin{equation}\label{eq:eta3express}
\eta(B ) = -\frac{\kappa\,( \varepsilon_1 + \frac{\varepsilon_2}{\varrho^2})}{\varepsilon_3(\tau-1)}\,\eta(T).
\end{equation}
On the other hand \eqref{eq:etaprima3} and contact whirl condition yields
\[
\eta(B )'=\varepsilon_2(\tau-1)\eta(N)=\frac{\varepsilon_2}{\varrho}(\tau-1)\eta(T).
\]
Differentiate the right-hand side of \eqref{eq:eta3express} and comparing  with the last equality.
Write $A:=\varepsilon_1+\frac{\varepsilon_2}{\varrho^2}$ and $D:=\varepsilon_3(\tau-1)$. Then
\[
\eta(B )=-\frac{\kappa A}{D}\eta(T).
\]
Differentiating gives
\[
\eta(B )'=-\frac{A}{D}\kappa'\eta(T)
+\frac{\kappa A}{D^2}D'\,\eta(T)
-\frac{\kappa A}{D}\eta(T)',
\]
and since $D'=\varepsilon_3\tau'$ and $\eta(T)'=\frac{\varepsilon_2}{\varrho}\kappa\eta(T)$ this becomes
\[
\eta(B )' = -\frac{A}{\varepsilon_3(\tau-1)}\kappa'\eta(T)
+\frac{\kappa A}{\varepsilon_3(\tau-1)^2}\tau'\,\eta(T)
-\frac{\kappa A}{\varepsilon_3(\tau-1)}\frac{\varepsilon_2}{\varrho}\kappa\eta(T).
\]
Equating this with $\eta(B )'=\frac{\varepsilon_2}{\varrho}(\tau-1)\eta(T)$ and cancelling the common factor $\eta(T)$ yields
\[
\frac{\varepsilon_2}{\varrho}(\tau-1)
= -\frac{A}{\varepsilon_3(\tau-1)}\kappa'
+ \frac{\kappa A}{\varepsilon_3(\tau-1)^2}\tau'
- \frac{\kappa A}{\varepsilon_3(\tau-1)}\frac{\varepsilon_2}{\varrho}\kappa .
\]
Multiplying by $\varepsilon_3(\tau-1)^2$ and isolating $\tau'$ gives the   ODE \eqref{eq:tauprime_whirl} for $\tau$.
\end{proof}
\begin{remark}
In the non-geodesic case Proposition \ref{prop:tauprime_whirl} recovers, in the non-geodesic case, the analogous differential relation obtained in \cite{guerrero2025lancret}. On any interval where the hypotheses of Proposition \ref{prop:tauprime_whirl} hold, equation \eqref{eq:tauprime_whirl} can be rewritten as
\begin{equation}\label{eq:u_whirl}
\left(\frac{\tau-1}{\kappa}\right)'=
\varepsilon_2\varrho\,(\tau-1)
\left(
1-\frac{1}{\varepsilon_2+\varepsilon_1\varrho^2}
\left(\frac{\tau-1}{\kappa}\right)^2
\right).
\end{equation}
This form isolates the coupled behavior of curvature and torsion through the scalar quantity $\frac{\tau-1}{\kappa}$, and is convenient for qualitative analysis of the corresponding differential system.
The branch $\tau\equiv1$ corresponds to a singular solution of \eqref{eq:tauprime_whirl}.
\end{remark}
We now turn to distinguished subclasses of non-geodesic contact whirl curves, namely $\kappa>0$, for which the Frenet frame is well defined.   
\subsection{Legendre contact whirl curves}
We now study the interaction between the Legendre condition and the contact whirl relation in Lorentzian Sasakian $3$--manifolds.

\begin{definition}\label{def:legendre}
Let $(M,\eta,\xi,g,\Phi)$ be a contact metric Lorentzian $3$--manifold. A smooth  regular curve $\alpha\colon I\to M$ is called a \emph{Legendre curve} if its tangent vector field lies in the contact distribution, that is,  $T\in \ker\eta$ along $\alpha$.
\end{definition}
Hence, if $\alpha$ is a Legendre curve, then $\eta(T)=0$, and from \eqref{eq:whirlcurve} we immediately obtain the following  characterization.
\begin{proposition}\label{prop:Legendre}
Let $\alpha$ be a Legendre Frenet curve in a Lorentzian Sasakian $3$--manifold. Then $\alpha$ is a contact whirl curve if and only if $\eta(N)=0.$
\end{proposition}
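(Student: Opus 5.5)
The statement follows directly from the definition of a contact whirl curve \eqref{eq:whirlcurve} together with the Legendre condition $\eta(T)=0$. I will prove the two implications separately. Throughout, $\alpha$ is a Legendre Frenet curve, so $\eta(T)\equiv 0$ along $\alpha$ by Definition~\ref{def:legendre}.

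\emph{($\Rightarrow$)} Suppose $\alpha$ is contact whirl. Then there is a constant $\varrho\neq 0$ with $\eta(T)=\varrho\,\eta(N)$ on $I$. Substituting $\eta(T)=0$ gives $\varrho\,\eta(N)=0$. Since $\varrho\neq0$, we get $\eta(N)\equiv 0$ along $\alpha$.

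\emph{($\Leftarrow$)} Suppose $\eta(N)\equiv 0$. Fix any nonzero constant, for example $\varrho=1$. Then $\varrho\,\eta(N)=0=\eta(T)$, so \eqref{eq:whirlcurve} holds and $\alpha$ is contact whirl. In this case the constant $\varrho$ is not unique: any nonzero real number works. This is consistent with the definition, which only asks for existence.

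There is no genuine obstacle here. The only point needing care is that the definition requires $\varrho\neq 0$. This is exactly what permits dividing by $\varrho$ in the forward direction, and the converse only requires producing some nonzero constant. I would add a short remark that the Frenet frame is assumed to exist, i.e.\ $\kappa>0$ and $\nabla_{\alpha'}\alpha'$ non-null, so that $N$ and hence $\eta(N)$ are well defined.

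As a consistency check rather than part of the proof, one can compare with \eqref{eq:etaprima1} in the Sasakian case $h=0$. Differentiating $\eta(T)=0$ gives $0=\eta(T)'=\varepsilon_2\kappa\,\eta(N)$, so $\eta(N)=0$ whenever $\kappa>0$. Hence the right-hand condition in the statement holds automatically for non-geodesic Legendre curves. This is the observation behind the subsequent claim that every non-geodesic Legendre Frenet curve is contact whirl.
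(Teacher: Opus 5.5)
Your proof is correct and is the paper's argument: the paper also reads the equivalence directly off the whirl relation $\eta(T)=\varrho\,\eta(N)$, using $\eta(T)=0$ and $\varrho\neq0$. Your consistency check via $\eta(T)'=\varepsilon_2\kappa\,\eta(N)$ in the case $h=0$ is the same observation the paper uses afterwards to show that every non-geodesic Legendre Frenet curve is contact whirl.
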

Assume now that $\alpha$ is a non-geodesic Legendre contact whirl curve. Then Proposition \ref{prop:Legendre} gives $\eta(N)=0$. From \eqref{eq:etaprima3}, it follows that $\eta(B)$ is constant. Substituting into \eqref{eq:etaprima2}, we obtain
\[
(\tau-1)\eta(B)=0.
\]
Since the Reeb vector field has unit length and is orthogonal to both $T$ and $N$, one has $\eta(B)\neq0$. Therefore, $\tau=1.$ This proves the following
\begin{proposition}\label{prop:legendreWhirl}
Let $\alpha$ be a non-geodesic Legendre contact whirl curve in a Lorentzian Sasakian $3$--manifold. Then its torsion is $\tau=1.$
\end{proposition}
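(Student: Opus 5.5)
The plan is to work directly with the Sasakian reductions of the derivative identities \eqref{eq:etaprima1}--\eqref{eq:etaprima3}. Since $\xi$ is Killing we have $h=0$, so all the terms $g(\cdot,\Phi hT)$ drop out. For a Legendre curve $\eta(T)\equiv 0$. Because $\varrho\neq 0$, Proposition~\ref{prop:Legendre} then gives $\eta(N)\equiv 0$ along $\alpha$. As a consistency check, \eqref{eq:etaprima1} reduces to $0=\varepsilon_2\kappa\,\eta(N)$, which holds. In fact, since $\kappa>0$ for a non-geodesic curve, \eqref{eq:etaprima1} alone would already force $\eta(N)=0$ from $\eta(T)=0$.

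Next I will feed $\eta(T)=\eta(N)=0$ into the remaining two identities. From \eqref{eq:etaprima3} we get $\eta(B)'=\varepsilon_2(\tau-1)\eta(N)=0$, so $\eta(B)$ is constant along $\alpha$. From \eqref{eq:etaprima2}, differentiating $\eta(N)\equiv 0$ gives
\[
0=\eta(N)'=-\varepsilon_1\kappa\,\eta(T)-\varepsilon_3(\tau-1)\eta(B)=-\varepsilon_3(\tau-1)\eta(B).
\]
This yields $(\tau-1)\eta(B)=0$ pointwise on $I$.

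The key remaining step is to show that $\eta(B)$ never vanishes. I will use the decomposition \eqref{eq:reeb1}, which under our vanishing conditions reduces to $\xi=-\varepsilon_3\eta(B)B$. Taking $g(\xi,\xi)=-1$ gives $-1=\varepsilon_3\,\eta(B)^2$. Hence $\eta(B)^2=1$, and necessarily $\varepsilon_3=-1$, so $B$ is timelike and $T,N$ are spacelike. Thus $\eta(B)=\pm1$ is nonzero everywhere, and dividing gives $\tau\equiv 1$ on $I$.

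I expect the only delicate point to be this nonvanishing of $\eta(B)$. One must make sure that the Frenet frame really spans the tangent space, so that \eqref{eq:reeb1} is valid. This holds because the frame is orthonormal and nondegenerate, which in turn needs $\nabla_{\alpha'}\alpha'$ non-null and $\kappa>0$; both are part of the standing assumptions for Frenet curves. The causal constraint $\varepsilon_3=-1$ is forced automatically, so no case split on the causal characters is needed.
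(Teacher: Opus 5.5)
Your proposal is correct and follows essentially the same argument as the paper: $\eta(N)=0$ from the whirl relation with $\eta(T)=0$, then $\eta(B)$ constant by \eqref{eq:etaprima3}, then $(\tau-1)\eta(B)=0$ by \eqref{eq:etaprima2}, and finally $\eta(B)\neq0$ because $\xi$ is proportional to $B$. Your computation $\varepsilon_3\eta(B)^2=-1$ makes explicit the step the paper states in one line, and it is the same computation the paper carries out in the proof of Proposition~\ref{prop:magnetic}.
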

This agrees with the classical behavior of Legendre curves in Sasakian $3$--geometry; compare with Proposition~8.2 of \cite{blair2010contact}.\\

Conversely, Proposition~8.13 of \cite{blair2010contact} states that, on a contact $3$--manifold, if a Legendre curve has torsion $\tau=1$, then the ambient manifold is Sasakian. Hence, within the class of non-geodesic Legendre contact whirl curves in dimension three, the condition $\tau=1$ is equivalent to the ambient manifold being Sasakian.
Finally, equation \eqref{eq:etaprima1} shows that every non-geodesic Legendre curve satisfies $\eta(N)=0$. Therefore:
\begin{proposition}
Let $M$ be a Lorentzian Sasakian $3$--manifold. Every non-geodesic Legendre Frenet curve is a contact whirl curve.
\end{proposition}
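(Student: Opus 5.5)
The plan is to reduce the statement to Proposition~\ref{prop:Legendre}. Let $\alpha$ be a non-geodesic Legendre Frenet curve in a Lorentzian Sasakian $3$--manifold. By that proposition it suffices to show $\eta(N)=0$ along $\alpha$. Once this holds, the whirl relation $\eta(T)=\varrho\,\eta(N)$ is satisfied with $0=\varrho\cdot 0$ for any nonzero constant $\varrho$, say $\varrho=1$. The definition only requires the existence of such a $\varrho$, so nothing more is needed.

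To get $\eta(N)=0$, I will use the Sasakian hypothesis. In dimension three the Reeb field $\xi$ is Killing, so $h=0$. Then \eqref{eq:etaprima1} reduces to
\[
\eta(T)'=\varepsilon_2\kappa\,\eta(N).
\]
The Legendre condition gives $\eta(T)\equiv0$ on $I$, hence $\eta(T)'\equiv0$. This yields $\varepsilon_2\kappa\,\eta(N)=0$. Since $\alpha$ is non-geodesic and a Frenet curve, we have $\kappa>0$ wherever the Frenet frame is defined, and $\varepsilon_2=\pm1$. Therefore $\eta(N)\equiv0$.

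The only point needing care is that $\kappa$ does not vanish, so that we may divide by it. This is exactly the non-geodesic Frenet assumption: the Frenet frame with $\kappa>0$ exists on all of $I$. For consistency I will also note that $\eta(B)\neq0$ then follows from \eqref{eq:reeb1}, because $\xi\neq0$. This matches Proposition~\ref{prop:legendreWhirl}, which then gives $\tau=1$; that conclusion is not needed for the claim itself. I expect no real obstacle beyond this bookkeeping.
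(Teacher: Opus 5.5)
Your argument is correct and is exactly the paper's: with $h=0$ from the Sasakian hypothesis, \eqref{eq:etaprima1} becomes $\eta(T)'=\varepsilon_2\kappa\,\eta(N)$, and $\eta(T)\equiv0$ together with $\kappa>0$ forces $\eta(N)=0$, after which Proposition~\ref{prop:Legendre} gives the whirl relation for any $\varrho\neq0$. One wording fix: $h=0$ holds because Sasakian implies $\xi$ is Killing, not because the dimension is three; dimension three only supplies the converse direction.
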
 
%
%
\subsection{Magnetic trajectories and contact whirl curves}\label{sec:magnetic}
%
The interaction between magnetic trajectories and special classes of curves has proved fruitful in Lorentzian contact geometry. In the slant setting, Lee \cite{lee2019slant} showed that contact magnetic curves are slant if and only if the ambient contact Lorentzian $3$--manifold is Sasakian. Motivated by this characterization, we study here the analogous compatibility problem for contact whirl curves.

\begin{definition}\label{def:magneticField}
A magnetic field on a semi-Riemannian manifold $(M,g)$ is a closed $2$--form $F$. Its associated Lorentz force is the $(1,1)$--tensor field $\varphi$ determined by
$$
g(\varphi(X),Y)=F(X,Y),
\qquad X,Y\in TM.
$$
A magnetic trajectory is a curve $\alpha$ satisfying the Lorentz equation
\begin{equation}\label{eq:lorentz}
\nabla_{\dot\alpha}\dot\alpha=\varphi(\dot\alpha).
\end{equation}
\end{definition}

\begin{definition}\label{def:lorentzforce}
Let $(M,\eta,\xi,g,\Phi)$ be a contact Lorentzian $3$--manifold and let $q\neq0$. The contact magnetic field of strength $q$ is the closed $2$--form
$$
F_{\xi,q}(X,Y)=-q\,d\eta(X,Y).
$$
\end{definition}

The corresponding Lorentz force is given by
$ 
\varphi=q\Phi.
$ 
Hence, if $T=\dot\alpha$, the magnetic equation becomes
\begin{equation}\label{eq:contact_lorentz}
\nabla_TT=q\Phi T.
\end{equation}
We first derive a  identity valid on any contact Lorentzian $3$--manifold. Differentiating $\eta(T)=-g(T,\xi)$ along a magnetic trajectory and using \eqref{eq:contact_lorentz} together with $ \nabla_X\xi=\Phi X-\Phi hX$ \eqref{eq:Phi 1},   we obtain
\begin{equation}\label{eq:magnetic_eta_general}
\begin{aligned}
\eta(T)'
&=-g(\nabla_TT,\xi)-g(T,\nabla_T\xi)
=-q\,g(\Phi T,\xi)-g(T,\Phi T)+g(T,\Phi hT)\\
&=g(T,\Phi hT).
\end{aligned}
\end{equation}
Thus the variation of the Reeb component along a magnetic trajectory is completely governed by the tensor $h$. We now combine the magnetic condition with the whirl constraint.
\begin{proposition}\label{prop:magnetic_legendre}
Let $\alpha\colon I\to M$ be a non-geodesic Frenet curve in a contact Lorentzian $3$--manifold. If $\alpha$ is both a magnetic trajectory and a contact whirl curve, then $\alpha$ is necessarily Legendre. In particular,
$\eta(T)=0$ and $\eta(N)=0.$
\end{proposition}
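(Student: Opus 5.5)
The plan is to compare the two expressions for the acceleration $\nabla_TT$ and read off its Reeb component. The curve $\alpha$ is a non-geodesic Frenet curve, so $\kappa>0$ and the first Frenet equation in \eqref{eq:FrenetSerret} gives $\nabla_TT=\varepsilon_2\kappa N$. The curve is also a magnetic trajectory, so \eqref{eq:contact_lorentz} gives $\nabla_TT=q\Phi T$. Equating the two, I get
\[
N=\frac{\varepsilon_2 q}{\kappa}\,\Phi T ,
\]
since $\varepsilon_2^2=1$ and $\kappa\neq 0$. In particular the principal normal is proportional to $\Phi T$.

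The key step is to show that $\Phi T$ lies in the contact distribution, i.e.\ $\eta(\Phi X)=0$ for every $X$. I would use \eqref{eq:eta} to write $\eta(\Phi X)=-g(\Phi X,\xi)$. Then I would use the skew-symmetry of the fundamental form $\omega(X,Y)=g(X,\Phi Y)$, which gives $g(\Phi X,\xi)=-g(X,\Phi\xi)$. Finally I would invoke the standard identity $\Phi\xi=0$. This identity also follows from \eqref{eq:gdeta}: taking $X=Y=\xi$ there yields $g(\Phi\xi,\Phi\xi)=-1+1=0$. Alternatively, in the Frenet-frame description it follows from \eqref{eq:xiwedgex}, since $\Phi\xi=-\xi\wedge\xi=0$. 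Hence $\eta(\Phi T)=0$, and applying $\eta$ to the displayed relation gives $\eta(N)=0$.

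The contact whirl condition \eqref{eq:whirlcurve} then gives $\eta(T)=\varrho\,\eta(N)=0$, so $\alpha$ is Legendre in the sense of Definition~\ref{def:legendre}. As a consistency check, this agrees with \eqref{eq:magnetic_eta_general}: it forces $g(T,\Phi hT)=0$ along $\alpha$, which I would note in passing.

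The only genuine obstacle is justifying $\eta\circ\Phi=0$ in the Lorentzian conventions adopted here. The paper states \eqref{eq:gdeta} and the definition of $\omega$, but not $\Phi\xi=0$ explicitly. I would first try to cite it as part of the standard definition of a (Lorentzian) almost contact metric structure. If a self-contained argument is needed, I would use the cross-product identity \eqref{eq:xiwedgex}. The other ingredient is $q\neq 0$, which holds by Definition~\ref{def:lorentzforce}. It is consistent with non-geodesy, since $q=0$ would give $\kappa=0$, although the argument itself only uses $\kappa\neq0$.
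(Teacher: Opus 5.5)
Your argument is correct and is essentially the paper's: the paper compares \eqref{eq:etaprima1} with \eqref{eq:magnetic_eta_general}. Once the common term $g(T,\Phi hT)$ cancels, that comparison is exactly your $\varepsilon_2\kappa\,\eta(N)=\eta(\nabla_TT)=q\,\eta(\Phi T)=0$ (the paper also uses $g(\Phi T,\xi)=0$ without comment), so your version is a more direct route that avoids $\nabla\xi$ and $h$ altogether.

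One correction to an aside: \eqref{eq:gdeta} with $X=Y=\xi$ only shows that $\Phi\xi$ is null, and in Lorentzian signature that does not imply $\Phi\xi=0$. Justify $\eta\circ\Phi=0$ instead by the standard almost contact identities, or directly from the Reeb property $d\eta(\xi,\cdot)=0$, which gives $g(\xi,\Phi Y)=\omega(\xi,Y)=0$.
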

\begin{proof}
Assume that $\alpha$ satisfies the whirl condition \eqref{eq:whirlcurve}
$\eta(T)=\varrho\,\eta(N),$   $\varrho\neq0.$ Since $\alpha$ is non-geodesic, $\kappa\neq0$. By \eqref{eq:etaprima1},
$$
\eta(T)' =\varepsilon_2\kappa\,\eta(N)+g(T,\Phi hT) 
$$
On the other hand, equation \eqref{eq:magnetic_eta_general} gives
$$
\eta(T)'=g(T,\Phi hT).
$$
Comparing both expressions for $\eta(T)'$, we obtain
$$
\varepsilon_2\kappa\,\eta(N)=0.
$$
Since $\kappa\neq0$ and $\varepsilon_2=\pm1$, it follows that $ \eta(N)=0.$
Since $\varrho\neq0$, the whirl condition implies
$ \eta(T)=0. $  Thus $\alpha$ is Legendre.
\end{proof}
We next derive the corresponding torsion law.
\begin{proposition}\label{prop:magnetic}
Let $\alpha\colon I\to M$ be a non-geodesic magnetic whirl curve in a contact Lorentzian $3$--manifold. Then its torsion satisfies
\begin{equation}\label{eq:magnetic}
\tau=1-\varepsilon_1\,g(hT,T).    
\end{equation} 
\end{proposition}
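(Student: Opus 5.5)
The plan is to reduce everything to the Legendre situation provided by Proposition~\ref{prop:magnetic_legendre}, and then read off the torsion from the identity \eqref{eq:etaprima2}. That identity holds on any contact Lorentzian $3$--manifold, since it was derived using only \eqref{eq:Phi 1}.

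First, by Proposition~\ref{prop:magnetic_legendre}, the curve satisfies $\eta(T)=\eta(N)=0$ identically on $I$. From the decomposition \eqref{eq:reeb1} we then get $\xi=-\varepsilon_3\eta(B)B$. Taking the norm and using $g(\xi,\xi)=-1$ gives $\varepsilon_3\eta(B)^2=-1$. Hence $\varepsilon_3=-1$ and $\eta(B)=\pm1$; in particular $\eta(B)$ never vanishes. Since $\eta(N)\equiv 0$, its derivative also vanishes. Substituting $\eta(T)=0$ and $\eta(N)'=0$ into \eqref{eq:etaprima2} yields
\[
\varepsilon_3(\tau-1)\eta(B)=g(\Phi hT,N).
\]

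The key remaining step is to express $g(\Phi hT,N)$ through $g(hT,T)$. Because $g(X,\Phi Y)=d\eta(X,Y)$ is skew-symmetric, $\Phi$ is $g$-skew, so
\[
g(\Phi hT,N)=-g(hT,\Phi N).
\]
By \eqref{eq:PhiEi}, which only uses the algebraic vector product \eqref{eq: cross} and the frame relations \eqref{eq:CrossBasis}, we have $\Phi N=\varepsilon_{13}\bigl(\eta(B)T-\eta(T)B\bigr)=\varepsilon_{13}\eta(B)T$. Therefore
\[
g(\Phi hT,N)=-\varepsilon_{13}\eta(B)\,g(hT,T).
\]
Combining this with the previous display gives $\varepsilon_3(\tau-1)\eta(B)=-\varepsilon_1\varepsilon_3\eta(B)\,g(hT,T)$. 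Dividing by $\varepsilon_3\eta(B)\neq0$ gives \eqref{eq:magnetic}.

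The main point to check carefully is that the identities used, namely \eqref{eq:PhiEi}, \eqref{eq:etaprima2} and the skewness of $\Phi$, are valid in the general contact (non-Sasakian) Lorentzian setting. Proposition~\ref{prop:magnetic} is stated there, and $h$ need not vanish. Skewness follows from the definition of $\omega$. The formula $\Phi X=-X\wedge\xi$ in \eqref{eq:xiwedgex} follows from \eqref{eq: cross} with $Y=\xi$, using $\Phi\xi=0$ and $\eta(\xi)=1$; it does not depend on the equality $\nabla_X\xi=\Phi X$, which is only asserted in the Sasakian case. So \eqref{eq:PhiEi} holds in general. As a consistency check, the magnetic equation \eqref{eq:contact_lorentz} gives $\varepsilon_2\kappa N=q\Phi T=-q\varepsilon_{23}\eta(B)N$. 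This is compatible with $\kappa=|q|$ and is not needed further. In the Sasakian case $h=0$, and the formula recovers $\tau=1$, in agreement with Proposition~\ref{prop:legendreWhirl}.
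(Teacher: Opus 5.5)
Your proof is correct and follows the paper's argument essentially step for step: you reduce to $\eta(T)=\eta(N)=0$ via Proposition~\ref{prop:magnetic_legendre}, read off $\varepsilon_3(\tau-1)\eta(B)=g(\Phi hT,N)$ from \eqref{eq:etaprima2}, rewrite the right-hand side as $-\varepsilon_{13}\eta(B)\,g(hT,T)$ using the skew-symmetry of $\Phi$ and $\Phi N=\varepsilon_{13}\eta(B)T$, and divide by $\eta(B)\neq0$, which follows from $g(\xi,\xi)=-1$. Your extra check, that \eqref{eq:PhiEi} rests only on the algebraic identity $\Phi X=-X\wedge\xi$ and not on $\nabla_X\xi=\Phi X$, is a worthwhile clarification the paper leaves implicit, since the proposition is stated for general (non-Sasakian) contact Lorentzian manifolds.
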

\begin{proof}
By Proposition~\ref{prop:magnetic_legendre}, we have
$\eta(T)=0=\eta(N)$. Hence the Reeb vector field is aligned with the binormal direction, so $\eta(B)^2=1$ and $\eta(B)$ is constant. Using  \eqref{eq:etaprima2}, we obtain
$$
0=-\varepsilon_1\kappa\,\eta(T)-\varepsilon_3(\tau-1)\eta(B)+g(\Phi hT,N).
$$
Since $\eta(T)=0$, this reduces to
$$
0=-\varepsilon_3(\tau-1)\eta(B)+g(\Phi hT,N).
$$
Now, from \eqref{eq:PhiEi} and $\eta(T)=\eta(N)=0$, we have
$$
\Phi N=\varepsilon_{13}\eta(B)\,T.
$$
Using the skew-symmetry of $\Phi$,
$$
g(\Phi hT,N)=-g(hT,\Phi N) =-\varepsilon_{13}\eta(B)\,g(hT,T).
$$
Since $\xi=-\varepsilon_3\eta(B)B$ and $g(\xi,\xi)=-1$, we obtain
$$
\varepsilon_3\eta(B)^2=-1.
$$
Hence $\varepsilon_3=-1$ and $\eta(B)^2=1$, i.e., $\eta(B)\neq0$,  it follows that
$$
(\tau-1)\eta(B)+\varepsilon_1\eta(B)\,g(hT,T)=0.
$$
dividing by $\eta(B)$ yields
$ 
\tau=1-\varepsilon_1 g(hT,T),
$ 
which proves \eqref{eq:magnetic}.
\end{proof}
In the Sasakian case, then $h=0$. From Proposition~\ref{prop:magnetic} follows immediately the following 
\begin{corollary}
Let $\alpha\colon I\to M$ be a non-geodesic magnetic whirl curve in a Lorentzian Sasakian $3$--manifold. Then the torsion is $ \tau=1. $
\end{corollary}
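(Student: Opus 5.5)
The corollary follows by specializing Proposition~\ref{prop:magnetic} to the Sasakian setting, so the plan has two steps.

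First, I check that the hypotheses of Proposition~\ref{prop:magnetic} hold. A Lorentzian Sasakian $3$--manifold is in particular a contact Lorentzian $3$--manifold. The curve $\alpha$ is assumed to be a non-geodesic Frenet curve that is both a magnetic trajectory for $F_{\xi,q}$ and a contact whirl curve. Proposition~\ref{prop:magnetic_legendre} then applies: it forces $\eta(T)=\eta(N)=0$, so $\alpha$ is Legendre.

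Second, I invoke \eqref{eq:magnetic}, which gives $\tau=1-\varepsilon_1 g(hT,T)$. In the Sasakian case $\xi$ is Killing. As recalled in Section~\ref{Sec:2}, in dimension three this is equivalent to $h=\frac12\mathcal L_\xi\Phi=0$, so that \eqref{eq:Phi 1} reduces to \eqref{eq:Phi 2}. Hence $g(hT,T)=0$ identically along $\alpha$, and therefore $\tau=1$.

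As an independent consistency check, I would also compare with Proposition~\ref{prop:legendreWhirl}. Since $\alpha$ is a non-geodesic Legendre contact whirl curve in a Lorentzian Sasakian $3$--manifold, that result gives $\tau=1$ directly, without using the magnetic equation beyond the Legendre conclusion.

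I see no real obstacle here. The only point needing care is that the frame is well defined, which requires $\kappa>0$ and $N$ non-null; this is part of the standing "non-geodesic Frenet curve" hypothesis. Everything else is a direct substitution of $h=0$.
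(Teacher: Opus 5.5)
Your proposal is correct and matches the paper's argument: the corollary follows by substituting $h=0$, which is the Sasakian condition in dimension three, into the torsion law $\tau=1-\varepsilon_1 g(hT,T)$ of Proposition~\ref{prop:magnetic}. Your consistency check is also valid on its own: Proposition~\ref{prop:magnetic_legendre} gives that $\alpha$ is Legendre, and then Proposition~\ref{prop:legendreWhirl} yields $\tau=1$ without reference to $h$.
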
 
Therefore, magnetic trajectories provide a natural dynamical setting in which the whirl condition forces the Legendre condition  and determines the torsion. In the Sasakian case, this rigidity reduces to the universal value $\tau=1$, in parallel with the slant magnetic characterization obtained in \cite{lee2019slant}.
%
\section{Null Contact Whirl Curves}\label{Sec:4}
In the null case, the Cartan frame is degenerate, so the vector product formalism used in the non-null setting is no longer available. Following the strategy used for null slant curves in \cite{lee2019slant}, we work intrinsically with the Cartan frame and the Sasakian identities.\\

Let $\alpha$ be a null curve in a Lorentzian Sasakian $3$--manifold $(M,\eta,\xi,g,\Phi)$, parametrized by pseudo arc-length. We assume that $\alpha$ is non-geodesic, so that there exists a Cartan frame $\{T,N,B\}$ satisfying
\begin{equation}\label{eq:cartan_metric_nonexist}
\begin{aligned}
g(T,T)=g(B,B)=g(T,N)=g(B,N)=0,\qquad
g(T,B)=1=g(N,N),
\end{aligned}
\end{equation}
and
\begin{equation}\label{eq:cartan_frenet_nonexist}
\nabla_TT=N,\qquad
\nabla_TN=-\tau T-B,\qquad
\nabla_TB=\tau N.
\end{equation}
Assume that $\alpha$ satisfies the contact whirl condition  \eqref{eq:whirlcurve} $\eta(T)=\varrho\,\eta(N),$ where $\varrho\neq0.$ It is convenient to write $\rho:=\frac{1}{\varrho}.$  For some smooth functions $x,y,z$, we decompose the Reeb vector field along $\alpha$ as
\[
\xi=xT+yN+zB.
\]
From \eqref{eq:cartan_metric_nonexist}, we have $\eta(T)=-g(T,\xi)=-z,$
and $\eta(N)=-g(N,\xi)=-y.$  Hence the whirl condition becomes $z=\varrho y,$ equivalently $y=\rho z.$ Moreover, since $g(\xi,\xi)=-1$, we obtain
\begin{equation}\label{eq:null_norm_whirl}
2xz+y^2=-1.
\end{equation}
Now write
\[
\Phi T=aT+bN+cB.
\]
Since $\Phi$ is skew-symmetric with respect to $g$, we have $g(\Phi T,T)=0.$ Using \eqref{eq:cartan_metric_nonexist}, this gives $c=0.$ Thus $\Phi T=aT+bN.$ Next, using the standard identity $\eta\circ\Phi=0$ \cite{blair2006contact}, we have
\[
0=\eta(\Phi T)=-g(\Phi T,\xi)=-(az+by).
\]
Using $y=\rho z$ in the above equality, we get
\[
(a+\rho b)z=0.
\]
If $z=0$, then $y=0$, and \eqref{eq:null_norm_whirl} gives $0=-1$, a contradiction. Hence $z\neq0$, and therefore
\[
a=-\rho b.
\]
Finally, using  the Sasakian identity \eqref{eq:gdeta}, we have
\[
g(\Phi T,\Phi T)=g(T,T)+\eta(T)^2=z^2.
\]
On the other hand, since $\Phi T=aT+bN$ and \eqref{eq:cartan_metric_nonexist} holds, we obtain
\[
g(\Phi T,\Phi T)=b^2.
\]
Thus $b^2=z^2,$ so there exists $\delta\in\{\pm1\}$ such that $b=\delta z,$ $a=-\delta\rho z.$ Therefore
\begin{equation}\label{eq:phiT_null_whirl}
\Phi T=\delta z\,(N-\rho T), \quad \text{where}\quad  \delta=\pm1.
\end{equation}
On the other hand, differentiating $\xi=xT+yN+zB$ along $\alpha$ and using \eqref{eq:cartan_frenet_nonexist}, we get
\[
\nabla_T\xi
=
(x'-\tau y)T
+
(x+y'+\tau z)N
+
(z'-y)B.
\]
Since $M$ is Sasakian, $\nabla_T\xi=\Phi T$, and comparing with \eqref{eq:phiT_null_whirl} yields
\begin{align}
x'-\rho\tau z &= -\delta\rho z,\label{eq:null_whirl_sys1}\\
x+y'+\tau z &= \delta z,\label{eq:null_whirl_sys2}\\
z'-y &= 0.\label{eq:null_whirl_sys3}
\end{align}
Using $y=\rho z$, equation \eqref{eq:null_whirl_sys3} becomes $z'=\rho z,$ hence 
\begin{equation}\label{eq:null_whirl_z}
z(s)=ke^{\rho s},
\qquad k\neq0.
\end{equation}
Furthermore, substituting $y=\rho z$ into \eqref{eq:null_norm_whirl}, we obtain
\begin{equation}\label{eq:null_whirl_x}
x= \frac{-1-\rho^2 z^2}{2z}.
\end{equation}
Differentiating \eqref{eq:null_whirl_x} and using $z'=\rho z$, we obtain
$$
x'=\frac{\rho}{2z}-\frac{\rho^3}{2}z.
$$
Substituting this into \eqref{eq:null_whirl_sys1} and using $z=ke^{\rho s}$ yields 
\begin{equation}\label{eq:null_whirl_tau}
\tau(s)=\delta+\frac{1}{2k^2}e^{-2\rho s}-\frac{\rho^2}{2}.
\end{equation}
The preceding computations yield the following structural result.
\begin{proposition}\label{prop:null_whirl_necessary}
Let $\alpha$ be a non-geodesic null contact whirl curve in a Lorentzian Sasakian $3$--manifold. Then, writing $\rho=1/\varrho$, there exist constants $k\neq0$ and $\delta=\pm 1$ such that
\begin{align*}
\eta(T)&=-ke^{\rho s},\\
\eta(N)&=-\rho ke^{\rho s},\\
\Phi T&=\delta ke^{\rho s}(N-\rho T),\\
\tau(s)&=\delta-\frac{\rho^2}{2}+\frac{1}{2k^2}e^{-2\rho s}.
\end{align*}
In particular, the null whirl condition imposes strong restrictions on the Reeb components and on the torsion.
\end{proposition}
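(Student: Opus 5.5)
The plan is to assemble the computations carried out just before the statement into a single argument, checking each step. First, decompose the Reeb field in the Cartan frame as $\xi=xT+yN+zB$. Using the inner products \eqref{eq:cartan_metric_nonexist}, we get $\eta(T)=-z$ and $\eta(N)=-y$. The whirl condition \eqref{eq:whirlcurve} then becomes $y=\rho z$, and the normalization $g(\xi,\xi)=-1$ gives \eqref{eq:null_norm_whirl}. From this we see that $z\neq0$: if $z=0$ then $y=0$ too, and \eqref{eq:null_norm_whirl} would read $0=-1$.

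Next, determine $\Phi T$ algebraically. Write $\Phi T=aT+bN+cB$.
\begin{itemize}
\item Skew-symmetry of $\Phi$ gives $g(\Phi T,T)=0$, hence $c=0$.
\item The identity $\eta\circ\Phi=0$ gives $az+by=0$, hence $a=-\rho b$, using $z\neq0$.
\item Equation \eqref{eq:gdeta} gives $g(\Phi T,\Phi T)=\eta(T)^2=z^2$, while the Cartan inner products give $g(\Phi T,\Phi T)=b^2$.
\end{itemize}
So $b=\delta z$ with $\delta=\pm1$, which is \eqref{eq:phiT_null_whirl}. One point needs care: $\delta$ is chosen pointwise. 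Since $b$ and $z$ are smooth and $z$ never vanishes, $\delta=b/z$ is continuous with values in $\{\pm1\}$, hence constant on the (connected) interval.

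The differential step comes next. Differentiate $\xi$ along $\alpha$ using \eqref{eq:cartan_frenet_nonexist}, and equate the result with $\Phi T$ via the Sasakian identity \eqref{eq:Phi 2}. This produces the system \eqref{eq:null_whirl_sys1}--\eqref{eq:null_whirl_sys3}. The $B$-component together with $y=\rho z$ gives the linear ODE $z'=\rho z$, so $z=ke^{\rho s}$ with $k\neq0$. This yields the stated formulas $\eta(T)=-ke^{\rho s}$ and $\eta(N)=-\rho z=-\rho ke^{\rho s}$, and substituting into \eqref{eq:phiT_null_whirl} gives the formula for $\Phi T$.

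The torsion formula is the step where errors are most likely. Solve \eqref{eq:null_norm_whirl} for $x$ to get \eqref{eq:null_whirl_x}, then differentiate using $z'=\rho z$ to get $x'=\rho/(2z)-\rho^3z/2$. Insert this into \eqref{eq:null_whirl_sys1} and divide by $\rho z$, which is allowed since $\rho\neq0$ and $z\neq0$. This gives
\[
\tau=\delta+\frac{1}{2z^2}-\frac{\rho^2}{2}=\delta-\frac{\rho^2}{2}+\frac{1}{2k^2}e^{-2\rho s}.
\]
The signs here should be double-checked against the $T$-component $x'-\tau y$ of $\nabla_T\xi$.

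Finally, the $N$-component \eqref{eq:null_whirl_sys2} has to be examined. It is not needed for the stated formulas, but it is an additional compatibility condition. Substituting $x$, $y$, $z$ and $\tau$ into it, I expect it to be either an identity or a constraint that restricts existence. Since the proposition only states necessary conditions, this does not affect the proof, but it should be noted.
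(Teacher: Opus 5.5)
Your proposal is correct and follows the paper's argument step for step: you decompose $\xi$ in the Cartan frame, determine $\Phi T$ algebraically, and then use the $B$- and $T$-components of $\nabla_T\xi=\Phi T$. Your continuity argument that $\delta=b/z$ is constant is a point the paper leaves implicit, and for the record the $N$-component \eqref{eq:null_whirl_sys2} is automatically satisfied, since after substitution it reduces to $\delta z=\delta z$.
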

Consequently, along any null contact whirl curve, the Reeb vector field admits the explicit decomposition
$$
\xi=\left(-\frac{1}{2k}e^{-\rho s}-\frac{k\rho^2 }{2}e^{\rho s}
\right)T+\rho k e^{\rho s}N+k e^{\rho s}B\quad\text{for }\quad k\neq 0.
$$
Although the Reeb vector field is part of the ambient Sasakian structure, Proposition \ref{prop:null_whirl_necessary} shows that its expression with respect to the Cartan frame of a null whirl curve is rigidly constrained.
\begin{remark}
Proposition \ref{prop:null_whirl_necessary} shows that the null whirl condition imposes strong intrinsic restrictions on the Cartan frame, the Reeb components and the torsion of the curve. Furthermore, Proposition \ref{prop:null_whirl_necessary} may be regarded as the null analogue of Lee's structural result for null slant curves \cite[Theorem~2]{lee2019slant}. In the present setting, the constant contact-angle condition is replaced by the whirl relation coupling the Reeb components of the tangent and principal normal vectors.  
\end{remark}
\section{Explicit examples in the Sasakian Lorentzian Heisenberg group}\label{Sec:5}
We consider the Lorentzian Heisenberg space $\mathbb{H}_3^L$, which is a
three-dimensional Lie group with global coordinates $(x,y,z)\in\mathbb{R}^3$.
Following the convention used in \cite{lee2019slant}, we take the contact form
\begin{equation}\label{eq:etaHeisenberg}
\eta = dz + y\,dx - x\,dy .
\end{equation}
The contact distribution is given by $\ker\eta=\operatorname{span}\{X,Y\}, $
where 
\begin{equation}\label{eq:invariantFrame}
X=\partial_x-y\,\partial_z,\qquad
Y=\partial_y+x\,\partial_z,\qquad
\xi=\partial_z.
\end{equation}
Indeed, it is straightforward to see that  $\eta(X)=0=\eta(Y)$, and $\eta(\xi)=1.$ The Lorentzian metric is
\begin{equation}\label{eq:metricHeisenberg}
g=dx^2+dy^2-\bigl(dz+y\,dx-x\,dy\bigr)^2.
\end{equation}
With respect to this metric, the frame $\{X,Y,\xi\}$ is orthonormal of signature $(+,+,-)$, i.e.,
$$
g(X,X)=1,\qquad g(Y,Y)=1,\qquad g(\xi,\xi)=-1,
$$
and all mixed products vanish. The nonzero Lie bracket is
$[X,Y]=2\xi.$ With respect to the frame $\{X,Y,\xi\}$, the Levi--Civita connection is
\begin{equation}\label{eq:LC_Heisenberg}
\begin{aligned}
\nabla_X X &=0, \qquad \nabla_Y Y=0, \qquad \nabla_\xi\xi=0, \qquad
\nabla_XY=\xi, \qquad \nabla_YX=-\xi, \\[4pt]
\nabla_Y\xi &=-X, \qquad \nabla_X\xi=Y, \qquad \nabla_\xi X=Y, \qquad
 \nabla_\xi Y=-X.
\end{aligned}
\end{equation}
The structure tensor is defined by
\begin{equation}\label{eq:sasakianHein}
\Phi X=Y,\qquad \Phi Y=-X,\qquad \Phi\xi=0.
\end{equation}
With this convention, $(\mathbb H_3^L,\Phi,\xi,\eta,g)$ is the standard Lorentzian Sasakian Heisenberg model. \\

Let $\alpha(s)=(x(s),y(s),z(s))$  be a regular curve in $\mathbb H_3^L$. Its velocity vector is
$$
\alpha'(s)=x'(s)\partial_x+y'(s)\partial_y+z'(s)\partial_z.
$$
Using \eqref{eq:invariantFrame}, we have $\partial_x=X+y\xi,$ $\partial_y=Y-x\xi,$ and $\partial_z=\xi.$ Therefore
\begin{equation}\label{eq:tangent_heins}
T=\alpha'=x'X+y'Y+\bigl(z'+yx'-xy'\bigr)\xi.
\end{equation}
Using the metric \eqref{eq:metricHeisenberg}, we obtain
\begin{equation}\label{eq:heis_norm}
g(T,T)=(x')^2+(y')^2-\left(z'+yx'-xy'\right)^2.
\end{equation}
Assume that $\alpha$ is parameterized by arc-length, so that
$g(T,T)=\varepsilon_1,$ and $\varepsilon_1\in\{1,-1\}.$ We denote by
$$
v:=z'+yx'-xy'
$$
the Reeb component of $T$. Then $\eta(T)=v.$  Using \eqref{eq:LC_Heisenberg} and \eqref{eq:tangent_heins}, we now compute the acceleration of $\alpha$. 
\[
\nabla_TT= x''X+y''Y+v'\xi +x'\nabla_T X+y'\nabla_TY+v\nabla_T\xi.
\]
From \eqref{eq:LC_Heisenberg}, we have
\begin{align*}
\nabla_T X &= x'\nabla_X X+y'\nabla_YX+v\nabla_\xi X = -y'\xi+vY,\\
\nabla_TY &= x'\nabla_XY+y'\nabla_YY+v\nabla_\xi Y
=x'\xi-vX,\\
\nabla_T\xi &= x'\nabla_X\xi+y'\nabla_Y\xi+v\nabla_\xi\xi=x'Y-y'X.    
\end{align*} 
Therefore,
\begin{equation}\label{eq:eta_acceleration_vector}
 \begin{aligned} 
\nabla_TT
&=x''X+y''Y+v'\xi+x'(-y'\xi+vY) +y'(x'\xi-vX)+v(x'Y-y'X)\\
&=\bigl(x''-2y'v\bigr)X+\bigl(y''+2x'v\bigr)Y+v'\xi. 
\end{aligned}    
\end{equation}
Consequently,
\begin{equation}\label{eq:eta_acceleration}
 \eta(\nabla_TT)=v'.   
\end{equation} 
Assume that $\alpha$ is a non-geodesic Frenet curve. Then, by definition of the principal normal vector field,
$$
\nabla_TT=\varepsilon_2\kappa\,N,
$$
If, in addition, $\alpha$ satisfies the contact whirl condition
\eqref{eq:whirlcurve},  $\eta(T)=\varrho\,\eta(N),$ where   $\varrho\neq0,$ then applying $\eta$ to the Frenet equation and using the whirl relation, $\eta(N)=\frac{1}{\varrho}\eta(T),$  we obtain
$$
\eta(\nabla_TT)=\varepsilon_2\kappa\,\eta(N)=
\frac{\varepsilon_2\kappa}{\varrho}\,\eta(T).
$$
Therefore, for a non-geodesic Frenet contact whirl curve,
\begin{equation}\label{eq:whirl_definition}
\eta(\nabla_TT)=\lambda(s)\eta(T),
\end{equation}
where
$ 
\lambda(s)=\frac{\varepsilon_2\kappa(s)}{\varrho}.
$ 
Hence, by \eqref{eq:eta_acceleration} and $\eta(T)=v$, equation
\eqref{eq:whirl_definition} becomes
\begin{equation}\label{eq:whirl_ode}
v'=\lambda(s)\,v.
\end{equation}
Equation \eqref{eq:whirl_ode} can be used as a construction principle for non-Legendre contact whirl curves.
\subsection{A construction method by quadratures}
\label{sec:construction_principle}
Write the unit tangent vector in the form
$$
T=r(s)\cos\beta(s)\,X+r(s)\sin\beta(s)\,Y+v(s)\xi,
$$
then $\eta(T)=v.$ Also, from $g(T,T)=\varepsilon_1$ we have
\begin{equation}\label{eq:e_1condition}
r(s)^2-v(s)^2=\varepsilon_1, \qquad  \varepsilon_1\in\{1,-1\}.    
\end{equation} 
Assume that $v\neq0$ and $r\neq0$ on the interval under consideration. Substituting $x'=r\cos\beta$ and $y'=r\sin\beta$ into \eqref{eq:eta_acceleration_vector}, we obtain
\begin{equation}\label{eq:polar_acceleration}
\nabla_TT= \bigl(r'\cos\beta-r(\beta'+2v)\sin\beta\bigr)X
+\bigl(r'\sin\beta+r(\beta'+2v)\cos\beta\bigr)Y +v'\xi.
\end{equation}
Consequently,
$$
g(\nabla_TT,\nabla_TT)=(r')^2+r^2(\beta'+2v)^2-(v')^2.
$$
Let $\varrho\neq0$ and $\lambda_0\neq0$ be constants, and choose $\varepsilon_2\in\{1,-1\}$  so that
\begin{equation}\label{eq:sign_condition}
\varepsilon_2\varrho\lambda_0>0.
\end{equation}
Assume that $v'=\lambda_0 v.$ Define $Q(s)=\varepsilon_2\varrho^2\lambda_0^2+(v')^2-(r')^2.$ 
On any interval where $Q(s)\geq0,$  choose $\beta$ as a solution of
$$
\beta' = -2v \pm \frac{\sqrt{Q}}{r}.
$$
Equivalently, choose $\beta$ such that
\begin{equation}\label{eq:beta_construction}
r^2(\beta'+2v)^2 = \varepsilon_2\varrho^2\lambda_0^2 + (v')^2-(r')^2.
\end{equation}
With this choice, the previous expression for the acceleration gives
$$
g(\nabla_TT,\nabla_TT)
=
\varepsilon_2\varrho^2\lambda_0^2.
$$
Hence the curve is non-geodesic, and its curvature satisfies
$\kappa^2=\varrho^2\lambda_0^2.$   By the sign condition \eqref{eq:sign_condition}, we may write
$$
\kappa=\varepsilon_2\varrho\lambda_0.
$$
In particular, $\kappa$ is constant and $\lambda_0=\frac{\varepsilon_2\kappa}{\varrho}.$  On the other hand, from \eqref{eq:polar_acceleration},
$$
\eta(\nabla_TT)=v'=\lambda_0 v=\lambda_0\eta(T).
$$
Therefore,
$$
\eta(\nabla_TT)
=
\frac{\varepsilon_2\kappa}{\varrho}\eta(T).
$$
Since $\nabla_TT=\varepsilon_2\kappa N,$ we also have $\eta(\nabla_TT)=\varepsilon_2\kappa\,\eta(N).$  As $\kappa\neq0$, it follows that
$$
\eta(T)=\varrho\,\eta(N).
$$
Thus the resulting curve satisfies the defining contact whirl relation
\eqref{eq:whirlcurve}. Finally, a curve realizing this tangent field is obtained by solving
$$
\begin{cases}
 x'=r\cos\beta,\\
y'=r\sin\beta,\\
z'=v-yx'+xy'.   
\end{cases} 
$$ 
Since $v=\eta(T)\neq0$, the constructed contact whirl curve is non-Legendre. 
\subsection{Explicit Examples}\label{sec:examples}
\subsubsection{A non-Legendre contact whirl curve obtained from \eqref{eq:whirl_ode}}
We construct a spacelike non-Legendre contact whirl curve by imposing
\eqref{eq:whirl_ode} with constant factor. Let
$  \varepsilon_1= \varepsilon_2= \varrho=\lambda_0=1. $   Choose
$$
v(s)=e^s,
\qquad
r(s)=\sqrt{1+e^{2s}}.
$$
Then \eqref{eq:e_1condition} holds,
so the corresponding tangent field is unit spacelike. Moreover,
$$
v'(s)=e^s=v(s),
$$
and therefore \eqref{eq:whirl_ode} holds with $\lambda(s)=\lambda_0=1.$   Now define $\beta$ as a solution of \eqref{eq:beta_construction}. Since here
$$
(r')^2=\frac{e^{4s}}{1+e^{2s}}, \qquad (v')^2=e^{2s},
$$
equation \eqref{eq:beta_construction} becomes
$$
r^2(\beta'+2v)^2 = 1+\frac{e^{2s}}{1+e^{2s}}.
$$
Equivalently, $\beta'=-2e^s\pm\frac{\sqrt{1+2e^{2s}}}{1+e^{2s}}.$ 
Hence
$$
\beta(s)=\beta_0+\int_{s_0}^{s}
\left(
-2e^u
\pm
\frac{\sqrt{1+2e^{2u}}}{1+e^{2u}}
\right)\,du.
$$
With this choice, \eqref{eq:beta_construction} yields $g(\nabla_TT,\nabla_TT)=1.$  Therefore the curve is non-geodesic and has constant curvature $\kappa=1.$  Since also
$$
\lambda_0=1=\frac{\varepsilon_2\kappa}{\varrho},
$$
the construction satisfies the defining whirl relation \eqref{eq:whirlcurve}. Finally, a curve realizing this tangent field is obtained by solving $x'=r\cos\beta,$ $y'=r\sin\beta,$ and $
z'=v-yx'+xy'.$  That is,
\begin{align*}
x(s) &=x_0+\int_{s_0}^{s}\sqrt{1+e^{2u}}\cos\beta(u)\,du,\\  
y(s) &=y_0+\int_{s_0}^{s}\sqrt{1+e^{2u}}\sin\beta(u)\,du,\\
z(s) &=z_0+\int_{s_0}^{s}\Bigl(e^u-y(u)x'(u)+x(u)y'(u)\Bigr)\,du.   
\end{align*} 
Since $\eta(T)=v=e^s\neq0,$ the resulting contact whirl curve is non-Legendre. 
\subsubsection{A horizontal helicoidal Legendre contact whirl curve }
Let $R>0$ and $\omega\neq0$, and consider
$$
\alpha(s)= \left(R\cos(\omega s),\,R\sin(\omega s),\, R^2\omega s \right).
$$
Assume moreover that the curve is parametrized by arc-length, namely
$R^2\omega^2=1.$ Then
$$
x'=-R\omega\sin(\omega s),\qquad
y'=R\omega\cos(\omega s),\qquad
z'=R^2\omega.
$$
A direct computation gives $yx'-xy'=-R^2\omega.$ Hence
$$
v=z'+yx'-xy'=R^2\omega-R^2\omega=0.
$$
Therefore $\eta(T)=0,$ so $\alpha$ is a Legendre curve. Using \eqref{eq:eta_acceleration_vector} and the fact that $v=0$, we obtain
$$
\nabla_TT=
-R\omega^2\cos(\omega s)\,X
-R\omega^2\sin(\omega s)\,Y.
$$
Since $R>0$ and $\omega\neq0$, this vector never vanishes. Hence $\nabla_TT\neq0,$ so the curve is non-geodesic. Moreover, $\nabla_TT$ has no Reeb component, so $\eta(\nabla_TT)=0.$ Since $\nabla_TT=\varepsilon_2\kappa N$ and $\kappa\neq0,$ it follows that $\eta(N)=0.$ Therefore,
$$
\eta(T)=0=\varrho\,\eta(N)
$$
for any nonzero constant $\varrho$, and hence $\alpha$ satisfies the contact whirl relation \eqref{eq:whirlcurve}. Geometrically, this family consists of horizontal helices winding around the Reeb axis.
%
%
%
\bibliographystyle{alpha}
\bibliography{sample}
\end{document}